\newtheorem{theorem}{Theorem}[section]
\newtheorem{corollary}[theorem]{Corollary}
\newtheorem{lemma}[theorem]{Lemma}
\newtheorem{proposition}[theorem]{Proposition}
\numberwithin{equation}{section}
\newcommand{\RR}{\mathbb{R}}
\newcommand{\NN}{\mathbb{N}}
\newcommand{\QQ}{\mathbb{Q}}
\newcommand{\ZZ}{\mathbb{Z}}
\newcommand{\mt}{\mapsto}
\newcommand{\cI}{\mathcal{I}}
\newcommand{\cT}{\mathcal{T}}
\newcommand{\cS}{\mathcal{S}}
\newcommand{\cR}{\mathcal{R}}
\newcommand{\cF}{\mathcal{F}}
\newcommand{\cC}{\mathscr{P}}
\newcommand{\cV}{\mathcal{V}}
\newcommand{\cA}{\mathbf{A}}
\newcommand{\cB}{\mathbf{B}}
\newcommand{\cU}{\mathcal{U}}
\newcommand{\sF}{\mathscr{F}}
\newcommand{\Bad}{\mathbf{Bad}}
\newcommand{\br}{\mathbf{r}}
\newcommand{\suc}{\mathrm{suc}}
\begin{document}

\title[Badziahin-Pollington-Velani's theorem and Schmidt's game]
{Badziahin-Pollington-Velani's theorem \\ and Schmidt's game}

\author{Jinpeng An}
\address{LMAM, School of Mathematical Sciences \& Beijing International Center for Mathematical Research, Peking University, Beijing, 100871, China}
\email{anjinpeng@gmail.com}

\date{March 2012}
\thanks{Research supported by NSFC grant 10901005 and FANEDD grant 200915.}

\begin{abstract}
We prove that for any $s,t\ge0$ with $s+t=1$ and any $\theta\in\RR$ with $\inf_{q\in\NN}q^{\frac{1}{s}}\|q\theta\|>0$, the set of $y\in\RR$ for which $(\theta,y)$ is $(s,t)$-badly approximable is $\frac{1}{2}$-winning for Schmidt's game. As a consequence, we remove a technical assumption in a recent theorem of Badziahin-Pollington-Velani on simultaneous Diophantine approximation.
\end{abstract}

\maketitle

\section{Introduction}

Let $\br=(r_1,\ldots,r_d)$ be a $d$-tuple of nonnegative real numbers with $\sum_{i=1}^dr_i=1$, and let
\begin{equation}\label{E:def}
\Bad(\br)=\{(x_1,\ldots,x_d)\in\RR^d:\inf_{q\in\NN}\max_{1\le i\le d}q^{r_i}\|qx_i\|>0\}
\end{equation}
be the set of \emph{$\br$-badly approximable} vectors in $\RR^d$, where $\|\cdot\|$ denotes the distance of a real number to the nearest integer.
It is known that:
\begin{itemize}
  \item $\Bad(\br)$ has Lebesgue measure zero.
  \item $\Bad(\br)$ is a \emph{thick} subset of $\RR^d$, that is, it has full Hausdorff dimension at any point in $\RR^d$ (\cite{PV,Sc2}).
  \item When $\br=(\frac{1}{d},\ldots,\frac{1}{d})$, $\Bad(\br)$ is $\frac{1}{2}$-winning for Schmidt's game (\cite{Sc1,Sc2}).
  \item In the general case, $\Bad(\br)$ is a winning set for a modified Schmidt game (\cite{KW}).
\end{itemize}

When $d=2$, W. M. Schmidt conjectured in \cite{Sc3} that $\Bad(\br)\cap\Bad(\br')\ne\emptyset$ for any $\br$ and $\br'$. This conjecture was recently proved by D. Badziahin, A. Pollington and S. Velani \cite{BPV}. They actually proved the following much more general theorem. For a pair $\br=(s,t)$ of nonnegative numbers with $s+t=1$ and $\theta\in\RR$, denote $$\Bad(s,t;\theta)=\{y\in\RR:(\theta,y)\in\Bad(s,t)\}.$$

\begin{theorem}[Badziahin-Pollington-Velani \cite{BPV}]\label{T:BPV}
Let $(s_n,t_n)_{n=1}^\infty$ be a countable sequence of pairs of nonnegative numbers with $s_n+t_n=1$, and let $s=\sup_{n\in\NN}s_n$. Suppose that
\begin{equation}\label{E:tech}
\liminf_{n\to\infty}\min\{s_n,t_n\}>0.
\end{equation}
Then for any $\theta\in\RR$ with $\inf_{q\in\NN}q^{\frac{1}{s}}\|q\theta\|>0$, the set
$\bigcap_{n=1}^\infty\Bad(s_n,t_n;\theta)$
is thick in $\RR$.
\end{theorem}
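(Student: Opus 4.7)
The plan is to prove the stronger statement announced in the abstract — that $\Bad(s,t;\theta)$ is $\frac{1}{2}$-winning for Schmidt's game whenever $\inf_{q\in\NN}q^{1/s}\|q\theta\|>0$ — and then deduce Theorem~\ref{T:BPV} from two standard formal properties of $\alpha$-winning sets: they are closed under countable intersection, and they are thick in $\RR$. Because these closure properties handle the intersection for free, the uniformity hypothesis \eqref{E:tech} will play no role in the argument; it enters the \cite{BPV} proof only to permit a direct simultaneous scale-counting across all pairs $(s_n,t_n)$. Hence the entire task reduces to proving the $\frac{1}{2}$-winning property for a single pair $(s,t)$.

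Fix $(s,t)$ with $s+t=1$ and $\theta$ with $c:=\inf_{q\in\NN}q^{1/s}\|q\theta\|>0$. Reformulate the target: $y\in\Bad(s,t;\theta)$ if and only if there exists $\epsilon>0$ such that $y$ lies outside every interval $I(p,q;\epsilon):=(\frac{p}{q}-\epsilon q^{-1-t},\,\frac{p}{q}+\epsilon q^{-1-t})$ whose denominator satisfies $q^s\|q\theta\|<\epsilon$. Now set up the game. Black chooses $B_0$ and fixes $\beta\in(0,1)$; White plays $\alpha=\frac{1}{2}$, so $B_n$ has radius $\rho_n=(\frac{1}{2}\beta)^n\rho_0$ and the response $A_n\subset B_n$ has radius $\rho_n/2$. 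Choose $\epsilon>0$ small, depending on $\rho_0$, $c$, $\beta$, so that no dangerous interval is comparable in size to $\rho_0$. White's task at round $n$ is to exhibit $A_n\subset B_n$ disjoint from every $I(p,q;\epsilon)$ that meets $B_n$ and is not so small that later rounds will handle it automatically.

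The heart of the proof is a uniform per-round counting estimate. Partition denominators into generations $q\in[\lambda^N,\lambda^{N+1})$ for a geometric growth $\lambda>1$ to be tuned against $\beta$. The hypothesis on $\theta$ forces $q^s\|q\theta\|\ge c\,q^{-t(1+s)/s}$, so the inequality $q^s\|q\theta\|<\epsilon$ excludes all $q$ below a threshold that grows as $\epsilon\to0$; combined with the classical separation $|\frac{p}{q}-\frac{p'}{q'}|\ge\frac{1}{qq'}$ of distinct rationals, this bounds by an absolute constant $K$ the number of dangerous intervals from a single active generation whose closure meets $B_n$. Calibrating the generation growth so that at each round only boundedly many generations are active at the scale $\rho_n$, one obtains a uniform bound on the total number of obstacles that White must dodge. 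Since $A_n$ has radius $\rho_n/2$ inside $B_n$, a standard shifting/covering argument (in the spirit of Schmidt's original treatment of $\Bad(\frac{1}{d},\ldots,\frac{1}{d})$) then locates $A_n$ so as to miss all of them.

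The main obstacle is exactly this calibration step: the generation growth $\lambda$, the matching between the game scale $\rho_n$ and the denominator scale $\lambda^N$, the constant $K$, and the choice of $\epsilon$ must all be selected compatibly with Black's arbitrary parameter $\beta$, so that a ball of radius $\rho_n/2$ always has room to avoid every $O(1)$ obstacles of radius $\lesssim\rho_n$. This is a single-round, game-theoretic recasting of the scale-counting in \cite{BPV}, and the crucial new input compared with the diagonal case $s=t=\frac{1}{2}$ is the lower bound on $q^s\|q\theta\|$ supplied by the Diophantine hypothesis on $\theta$. Once the counting lemma is in hand, the winning strategy is round-by-round automatic, and Theorem~\ref{T:BPV} then follows from the countable-intersection and thickness properties of $\frac{1}{2}$-winning sets — with no appeal to \eqref{E:tech}.
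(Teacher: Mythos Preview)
Your high-level reduction is exactly the one the paper uses: prove that each $\Bad(s,t;\theta)$ is $\frac{1}{2}$-winning and then invoke the countable-intersection and thickness properties of winning sets (this is how the paper obtains Corollary~\ref{C:1}, of which Theorem~\ref{T:BPV} is a special case). The difficulty, and the entire substance of the paper, lies in the counting step you describe as ``a single-round, game-theoretic recasting of the scale-counting in \cite{BPV}'', and here your sketch has a genuine gap.

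The claim that the separation $|p/q-p'/q'|\ge 1/(qq')$ bounds by an absolute constant the number of dangerous intervals from a single generation meeting $B_n$ does not hold as stated. The dangerous intervals live in the $y$-coordinate, centred at rationals $r/q$, while the constraint $q^s\|q\theta\|<\epsilon$ is a condition in the $\theta$-coordinate. For $q\in[\lambda^N,\lambda^{N+1})$ the $\theta$-separation does not prevent there being on the order of $\epsilon\lambda^{Nt}$ admissible $q$'s (the hypothesis on $\theta$ gives a lower bound on $\|q\theta\|$, not an upper bound on the number of small values), and the $y$-separation $|r/q-r'/q'|\ge 1/(qq')\sim\lambda^{-2N}$ allows on the order of $\rho_n\lambda^{2N}\sim\epsilon\lambda^{Ns}$ points in $B_n$. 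Neither count is bounded, and controlling their interaction is precisely the hard part.

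What the paper does is structural rather than purely metric. By Lemma~\ref{L:aug} (essentially \cite[Lemma~1]{BPV}), every dangerous rational $P=(p/q,r/q)$ lies on a non-vertical rational line $L_P=L(A_P,B_P,C_P)$ with $|A_P|\le q^s$ and $B_P\le q^t$; the partition into scales $\cC_n$ is then by the product $qB_P$, not by $q$ alone. The decisive Lemma~\ref{L:const} shows that within a further refinement $\cC_{n,k}$ all effective dangerous points share a \emph{single} line $L_P$; this collapses the family $\{\Delta(P)\}$ into one interval of length $lR^{-n}$, yielding the ``at most two bad vertices'' estimate of Lemma~\ref{L:<=2}. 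This line lemma is the missing idea in your proposal; elementary separation of rationals cannot substitute for it.

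Finally, the passage from counting to strategy is not ``round-by-round automatic''. The estimate of Lemma~\ref{L:<=2} is indexed by ancestors $\tau'\in\cS_{n-k}$ for every $1\le k\le n$, so obstacles at stage $n$ arrive from all earlier levels simultaneously. The paper handles this with a tree formalism: an $[R]$-regular tree of candidate intervals, a recursion $a_n\ge 6a_{n-1}-2\sum_{k=1}^n a_{n-k}$ on survivor counts inside any $6$-regular subtree, and the combinatorial Proposition~\ref{P:tree} (proved via K\"onig's lemma) guaranteeing an $([R]-5)$-regular subtree of good intervals, through which Alice navigates in blocks of four rounds. Your sketch would need both the line lemma and this multi-scale bookkeeping to go through.
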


Badziahin, Pollington and Velani wrote in \cite{BPV} that it would be desirable to remove assumption \eqref{E:tech} from the theorem. On the other hand, it is natural to expect that the theorem can be established using Schmidt's game. In this paper we prove that these are possible. Our main result is:

\begin{theorem}\label{T:main}
Let $s,t\ge0$ be such that $s+t=1$, and let $\theta\in\RR$ be such that
\begin{equation}\label{E:tech2}
\inf_{q\in\NN}q^{\frac{1}{s}}\|q\theta\|>0.
\end{equation}
Then $\Bad(s,t;\theta)$ is $\frac{1}{2}$-winning.
\end{theorem}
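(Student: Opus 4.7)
Plan. The plan is to exhibit an explicit winning strategy for Alice in the $(\frac{1}{2},\beta)$-Schmidt game on $\RR$, valid for every $\beta\in(0,1)$. With $B_k$ denoting Bob's ball at stage $k$, $\rho_k=\rho_0(\beta/2)^k$ its radius, and $y\in\bigcap_k B_k$ the outcome, fix a constant $c>0$ small in terms of $\beta$, $s$, $t$, and $c_\theta:=\inf_{q\in\NN} q^{1/s}\|q\theta\|$. Call $q\in\NN$ \emph{$c$-dangerous} if $q^{s}\|q\theta\|<c$, and set $I(p,q):=(p/q-cq^{-1-t},\,p/q+cq^{-1-t})$. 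The conclusion $y\in\Bad(s,t;\theta)$ is then equivalent to ensuring $y\notin I(p,q)$ for every $c$-dangerous $q$ and every $p\in\ZZ$; for non-dangerous $q$ the required bound is automatic.

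The hypothesis \eqref{E:tech2} enters through a sparsity estimate for dangerous denominators. If $q_1<q_2$ are both $c$-dangerous, the triangle inequality gives $\|(q_2-q_1)\theta\|\leq\|q_1\theta\|+\|q_2\theta\|<2c\,q_1^{-s}$, while \eqref{E:tech2} applied to $q_2-q_1$ gives $\|(q_2-q_1)\theta\|\geq c_\theta(q_2-q_1)^{-1/s}$; combining, $q_2-q_1\geq(c_\theta/2c)^{s}q_1^{s^{2}}$. Iterating this, one bounds the number of $c$-dangerous denominators in any given scale, and, equivalently, the growth of the convergent denominators of $\theta$ by $q_{n+1}\leq c_\theta^{-1}q_n^{1/s}$.

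Alice's strategy is hierarchical in scales. Let $Q_k\asymp\rho_k^{-1}$, so that every interval of length $2\rho_k$ contains at most one rational per denominator $q\leq Q_k$, and assign the $c$-dangerous $q\in(Q_{k-1},Q_k]$ to the $k$-th layer. For $q$ in this layer the bad interval $I(p,q)$ has length much smaller than $\rho_k$ once $c$ is small, and contributes at most one obstacle inside $B_k$. At stage $k$, Alice picks $A_k\subseteq B_k$ of radius $\rho_k/2$ avoiding the obstacles of layer $k$. Since in the $\frac{1}{2}$-game a single obstruction centered in $B_k$ cannot be dodged at the same stage, Alice dodges each obstacle with a few stages of look-ahead: at an earlier stage $k'<k$, while $\rho_{k'}$ still dwarfs the obstacle, she shifts $A_{k'}$ off-center so that none of Bob's subsequent balls can recenter on it.

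The main obstacle is verifying the feasibility of Alice's move at every stage. One must show that the cumulative forbidden portion of her feasible center segment $[c_{B_k}-\rho_k/2,\,c_{B_k}+\rho_k/2]$---summed over all currently active obstacles, each expanded by its look-ahead allowance---remains strictly shorter than $\rho_k$. Bounding this length uniformly in $k$ reduces to summing a geometric series whose coefficients are controlled by the sparsity from \eqref{E:tech2} (giving the number of obstacles per layer) and the smallness of $c$ (giving their length). For $c$ sufficiently small the bound holds at every stage, Alice's strategy is well defined, and the limit $y$ satisfies the desired inequality. Making this bookkeeping precise is the technical heart of the argument, and is exactly the point at which the Diophantine condition \eqref{E:tech2} on $\theta$ is indispensable.
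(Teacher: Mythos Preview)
Your sparsity estimate is correct but far too weak to close the argument, and this is not just bookkeeping. From $q_{2}-q_{1}\ge (c_\theta/2c)^{s}q_{1}^{s^{2}}$ you get at most $O(Q^{1-s^{2}})$ dangerous denominators in a dyadic range $(Q,RQ]$. Each such $q$ contributes one obstacle in $B_k$ of length $\asymp cQ^{-1-t}$, so the total forbidden length is $\asymp cQ^{-t-s^{2}}$, while Alice's feasible segment has length $\asymp Q^{-1}$. You would need $1-s^{2}\le t$, i.e.\ $s\le s^{2}$, which fails for every $0<s<1$. No look-ahead scheme repairs this: pushing the avoidance to an earlier stage $k'$ only enlarges the feasible segment to $\asymp Q^{-1}R^{k-k'}$, but the obstacle count coming from later layers accumulates geometrically as well, and the exponent deficit $1-s^{2}-t=s t>0$ persists. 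In short, treating each dangerous $q$ as an independent obstacle cannot succeed.

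The paper's proof supplies the missing structural ingredient. Instead of counting dangerous \emph{denominators}, it considers dangerous rational \emph{points} $(p/q,r/q)$ and shows (via a pigeonhole lemma of Badziahin--Pollington--Velani) that every such point lies on a rational line $Ax-By+C=0$ with $|A|\le q^{s}$, $1\le B\le q^{t}$. Points are then assigned to scales by the product $qB$ rather than by $q$, and within each scale class $\mathscr{P}_{n,k}$ the paper proves that all points whose bad interval meets a given ancestor interval lie on a \emph{single} such line. This collapses the potentially polynomial obstacle count to a constant (two) per ancestor per class, and the remaining tree argument goes through. The line structure, not just the Diophantine gap for $\theta$, is what makes the counting work; your proposal does not invoke it, and without it the feasibility claim in your last paragraph is false for $0<s<1$.
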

Some remarks about assumption \eqref{E:tech2} are in order.
\begin{itemize}
  \item When $s=0$, the left hand side of \eqref{E:tech2} is understood to be $\infty$.
  \item If $s>0$ and \eqref{E:tech2} does not hold, then $\Bad(s,t;\theta)=\emptyset$ (\cite{BPV}).
  \item If $\theta\in\Bad$,\footnote{As usual, $\Bad=\{x\in\RR:\inf_{q\in\NN}q\|qx\|>0\}$ denotes the set of badly approximable numbers.} then \eqref{E:tech2} automatically holds.
\end{itemize}

The definition of winning sets will be reviewed in Section \ref{S:3}. Here we recall that for any $\alpha\in(0,1)$, a countable intersection of $\alpha$-winning sets is also $\alpha$-winning, and an $\alpha$-winning subset of $\RR^d$ is thick (\cite{Sc1}).
In view of these facts, it follows from Theorem \ref{T:main} that Theorem \ref{T:BPV} remains true without assumption \eqref{E:tech}. In other words, we have

\begin{corollary}\label{C:1}
Let $(s_n,t_n)_{n=1}^\infty$ be a countable sequence of pairs of nonnegative numbers with $s_n+t_n=1$, and let $s=\sup_{n\in\NN}s_n$.
Then for any $\theta\in\RR$ with $\inf_{q\in\NN}q^{\frac{1}{s}}\|q\theta\|>0$, the set
$\bigcap_{n=1}^\infty\Bad(s_n,t_n;\theta)$
is thick in $\RR$.\footnote{After a preliminary version of this paper was circulated, the author learned from Barak Weiss that E. Nesharim \cite{Ne} has proved that under the conditions of Corollary \ref{C:1}, the set $\bigcap_{n=1}^\infty\Bad(s_n,t_n;\theta)$ has a nonempty intersection with the support of every measure on $\RR$ satisfying a power law, and in particular is uncountable.}
\end{corollary}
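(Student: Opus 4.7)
My plan is to derive the corollary as a direct formal consequence of Theorem \ref{T:main} together with the two properties of Schmidt's game recalled immediately before the corollary: countable intersections of $\tfrac12$-winning sets remain $\tfrac12$-winning, and every $\tfrac12$-winning subset of $\RR$ is thick. The removal of the technical assumption \eqref{E:tech} from Theorem \ref{T:BPV} comes for free here, because Theorem \ref{T:main} establishes a winning property for each pair $(s_n,t_n)$ in isolation, with no uniformity across $n$ required; in particular no lower bound on $\min\{s_n,t_n\}$ enters the argument.

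The one substantive step is to verify that the hypothesis of Theorem \ref{T:main} holds for every individual pair $(s_n,t_n)$, given only the global hypothesis phrased in terms of the supremum $s=\sup_n s_n$. Since $s_n\le s$, we have $\tfrac{1}{s_n}\ge\tfrac{1}{s}$ (with $\tfrac{1}{0}=\infty$ by the convention already in force for Theorem \ref{T:main}), and hence $q^{1/s_n}\ge q^{1/s}$ for every $q\in\NN$. Consequently
$$
\inf_{q\in\NN}q^{1/s_n}\|q\theta\|\;\ge\;\inf_{q\in\NN}q^{1/s}\|q\theta\|\;>\;0,
$$
so Theorem \ref{T:main} applies to the pair $(s_n,t_n)$ and yields that $\Bad(s_n,t_n;\theta)$ is $\tfrac12$-winning. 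Applying the intersection property gives that $\bigcap_{n=1}^\infty\Bad(s_n,t_n;\theta)$ is itself $\tfrac12$-winning, and then the thickness of $\tfrac12$-winning subsets of $\RR$ produces the conclusion.

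I do not anticipate any genuine obstacle: once Theorem \ref{T:main} is in hand, the deduction is pure bookkeeping with Schmidt's game. The only cosmetic subtlety is the boundary case $s_n=0$ (so $t_n=1$), which is absorbed by the convention $q^{1/0}=\infty$, making the infimum in the hypothesis automatically $+\infty$ for such $n$ and so allowing Theorem \ref{T:main} to apply uniformly over the whole sequence.
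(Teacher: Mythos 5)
Your proposal is correct and is exactly the deduction the paper intends: apply Theorem \ref{T:main} to each pair $(s_n,t_n)$ (using $s_n\le s$ to transfer the hypothesis, with the stated convention for $s_n=0$), then invoke the countable-intersection and thickness properties of $\frac{1}{2}$-winning sets. The monotonicity check $q^{1/s_n}\ge q^{1/s}$ for $q\in\NN$ is the only point needing verification, and you handle it correctly.
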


It was proved in \cite{BPV} that under assumption \eqref{E:tech}, the set $\bigcap_{n=1}^\infty\Bad(s_n,t_n)$ is thick in $\RR^2$. This was derived from Theorem \ref{T:BPV} using Marstrand's slicing theorem (\cite{Fa,KM}) and the thickness of $\Bad$. By the same reasons,
Corollary \ref{C:1} implies the following result.

\begin{corollary}
Let $(s_n,t_n)_{n=1}^\infty$ be a countable sequence of pairs of nonnegative numbers with $s_n+t_n=1$.
Then $\bigcap_{n=1}^\infty\Bad(s_n,t_n)$ is a thick subset of $\RR^2$.
\end{corollary}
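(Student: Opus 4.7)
The plan is to upgrade the one-dimensional thickness supplied by Corollary~\ref{C:1} to a two-dimensional statement by the same fiber argument that is used in \cite{BPV} to derive the two-dimensional result from Theorem~\ref{T:BPV}. Fix an arbitrary non-empty open rectangle $I_1\times I_2\subset\RR^2$; it suffices to show that
\[
E:=\Big(\bigcap_{n=1}^\infty\Bad(s_n,t_n)\Big)\cap(I_1\times I_2)
\]
has Hausdorff dimension $2$, since $I_1\times I_2$ is arbitrary.

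First I would identify a large set of \emph{good} first coordinates. Set $s=\sup_{n\in\NN}s_n$. By the third bullet after Theorem~\ref{T:main}, every element of the classical set $\Bad\subset\RR$ automatically satisfies \eqref{E:tech2}. Since $\Bad$ is itself thick in $\RR$, the set $B:=\Bad\cap I_1$ has Hausdorff dimension $1$. For each $\theta\in B$, Corollary~\ref{C:1}---now without the hypothesis \eqref{E:tech}---shows that $\bigcap_{n=1}^\infty\Bad(s_n,t_n;\theta)$ is thick in $\RR$, and in particular its intersection with $I_2$ has Hausdorff dimension $1$. By the very definition of $\Bad(s_n,t_n;\theta)$, this is the image under the projection $(\theta,y)\mt y$ of the vertical slice of $E$ above $\theta$.

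The final step is to invoke Marstrand's slicing theorem (\cite{Fa,KM}): if a Borel set $F\subset\RR^2$ has the property that, for every $\theta$ in some set of Hausdorff dimension $\ge\alpha$, the vertical slice $F\cap(\{\theta\}\times\RR)$ has Hausdorff dimension $\ge\beta$, then $\dim_H F\ge\alpha+\beta$. Taking $F=E$ and $\alpha=\beta=1$ yields $\dim_H E=2$. Measurability of $E$ is not a concern: $\Bad(\br)$ as defined in \eqref{E:def} is the countable union over $k\in\NN$ of the closed sets $\{x\in\RR^d:\inf_{q\in\NN}\max_i q^{r_i}\|qx_i\|\ge 1/k\}$, hence $F_\sigma$, and a countable intersection of $F_\sigma$ sets is Borel. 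The bulk of the work has already been done in proving Theorem~\ref{T:main} and Corollary~\ref{C:1}; the only point requiring attention here is matching the hypotheses of Marstrand's theorem with the present setup, and this is purely routine.
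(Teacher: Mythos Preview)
Your proposal is correct and follows exactly the route indicated in the paper: use the thickness of $\Bad$ to obtain a full-dimensional set of admissible $\theta$, apply Corollary~\ref{C:1} to each vertical fiber, and then invoke Marstrand's slicing theorem (\cite{Fa,KM}) to pass from fiber dimensions to the two-dimensional bound. The paper only sketches this in one sentence; your version spells out the details (including the Borel/$F_\sigma$ check) but introduces no new ideas beyond what the paper intends.
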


The proof in \cite{BPV} of Theorem \ref{T:BPV} uses the dual form representation of $\Bad(\br)$ and is based on estimates of numbers of certain good intervals. Our proof of Theorem \ref{T:main} uses some ideas from \cite{BPV}, especially the idea of constructing good intervals. But we work with the simultaneous form \eqref{E:def} directly. This enables us to obtain better estimates for numbers of good intervals. With the help of the estimates, we prove that when playing Schmidt's game, the first player can choose good intervals and win the game.

The family of good intervals has the structure of a rooted tree (such a family is called ``tree-like" in \cite{KM,KW}). In this paper, we use the language of trees and represent intervals as vertices of a rooted tree. By doing this, we can employ K\"{o}nig's lemma in graph theory to give a simple proof of a structural property of the family of good intervals. In Section \ref{S:2}, we will give some preliminaries on rooted trees. Theorem \ref{T:main} will be proved in Sections \ref{S:3}--\ref{S:4}.

\section{Preliminaries on rooted trees}\label{S:2}

We first fix some notation and terminology. Recall that a \emph{rooted tree} is a connected graph $\cT$ without cycles and with a distinguished vertex $\tau_0$, called the \emph{root} of $\cT$. We identify $\cT$ with the set of its vertices. Any vertex $\tau\in\cT$ is connected to $\tau_0$ by a unique path. The length of the path is called the \emph{height} of $\tau$. The set of vertices of height $n$ is called the $n$th \emph{level} of $\cT$ and is denoted by $\cT_n$. Thus $\cT_0=\{\tau_0\}$. Let $\tau,\tau'\in\cT$. We write $\tau\prec\tau'$ to indicate that the path between $\tau_0$ and $\tau$ passes through $\tau'$. In this case, $\tau$ is called a \emph{descendant} of $\tau'$, and $\tau'$ is called an \emph{ancestor} of $\tau$. By definition, every vertex is a descendant and an ancestor of itself. For $\cV\subset\cT$, we write $\tau\prec\cV$ if $\cV$ contains an ancestor of $\tau$. If $\tau\prec\tau'$ and the height of $\tau$ is one greater than that of $\tau'$, then $\tau$ is called a \emph{successor} of $\tau'$, and $\tau'$ is called the \emph{predecessor} of $\tau$. Let $\cT(\tau)$ denote the rooted tree formed by all descendants of $\tau$. The root of $\cT(\tau)$ is $\tau$. Denote $\cT_{\suc}(\tau)=\cT(\tau)_1$, which is the set of all successors of $\tau$. More generally, for $\cV\subset\cT$, we denote $\cT_{\suc}(\cV)=\bigcup_{\tau\in\cV}\cT_{\suc}(\tau)$. In this paper, we use the convention that a subtree of $\cT$ has the same root as $\cT$. Thus $\cT(\tau)$ is not regarded as a subtree of $\cT$ unless $\tau=\tau_0$.

Let $N\in\NN$. We say that a rooted tree is \emph{$N$-regular} if every vertex has exactly $N$ successors. Note that an $N$-regular rooted tree is necessarily infinite. The following proposition will be needed later.

\begin{proposition}\label{P:tree}
Let $\cT$ be an $N$-regular rooted tree, $\cS\subset\cT$ be a subtree, and $1\le m\le N$ be an integer. Suppose that for every $m$-regular subtree $\cR$ of $\cT$, $\cS\cap\cR$ is infinite. Then $\cS$ has an $(N-m+1)$-regular subtree.
\end{proposition}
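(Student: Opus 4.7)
The plan is to prove the contrapositive: assuming $\cS$ has no $(N-m+1)$-regular subtree, I will construct an $m$-regular subtree $\cR$ of $\cT$ with $\cS\cap\cR$ finite. The approach attaches a rank to each vertex of $\cS$ measuring how deep an $(N-m+1)$-regular subtree can sprout there, uses K\"onig's lemma once to force this rank at the root to be finite, and then inducts on the rank to carry out the construction.

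For $\tau\in\cS$, let $\rho(\tau)\in\{0,1,2,\ldots,\infty\}$ be the supremum of integers $k\ge 0$ such that $\cS\cap\cT(\tau)$ contains an $(N-m+1)$-regular subtree of depth $k$ rooted at $\tau$ (so $\rho(\tau)\ge 0$ as $\{\tau\}$ qualifies at depth $0$). I first observe that $\rho(\tau_0)<\infty$: the collection of finite $(N-m+1)$-regular subtrees of $\cS$ rooted at $\tau_0$, ordered by truncation, forms a finitely-branching rooted tree, and $\rho(\tau_0)=\infty$ would make it infinite; K\"onig's lemma would then yield an infinite branch whose union is an infinite $(N-m+1)$-regular subtree of $\cS$, contradicting the hypothesis.

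The driving local property of $\rho$ is: if $\tau\in\cS$ satisfies $\rho(\tau)=k<\infty$, then at least $m$ children $\tau'$ of $\tau$ either lie outside $\cS$ or satisfy $\rho(\tau')\le k-1$. Indeed, otherwise at least $N-m+1$ children would lie in $\cS$ with $\rho\ge k$, and appending an $(N-m+1)$-regular subtree of depth $k$ rooted at each such child would furnish an $(N-m+1)$-regular subtree of depth $k+1$ rooted at $\tau$, contradicting $\rho(\tau)=k$.

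Finally, I induct on $k=\rho(\tau)$ to show: for every $\tau\in\cS$ with $\rho(\tau)<\infty$, some $m$-regular subtree of $\cT(\tau)$ rooted at $\tau$ has finite intersection with $\cS$. For $k=0$, pick $m$ children not in $\cS$ and complete arbitrarily to an $m$-regular subtree; no descendant lies in $\cS$, since $\cS$ is closed under taking ancestors. For $k\ge 1$, pick $m$ children as furnished by the local property, apply the induction hypothesis (or the $k=0$ reasoning when $\tau'\notin\cS$) to each, and join with $\tau$. Applying this at $\tau_0$ produces the desired $\cR$ and completes the proof. The main obstacle is setting up the rank $\rho$ so that K\"onig's lemma forces $\rho(\tau_0)<\infty$; the subsequent induction is routine.
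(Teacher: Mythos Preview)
Your proof is correct. Both your argument and the paper's hinge on K\"onig's lemma in the same way: the tree of finite $(N-m+1)$-regular subtrees of $\cS$ (rooted at $\tau_0$, ordered by truncation) is finitely branching, so if $\cS$ contains no infinite $(N-m+1)$-regular subtree, this auxiliary tree is finite. The paper phrases this passage directly; you encode it via the rank $\rho$ and the observation that $\rho(\tau_0)=\infty$ would yield an infinite branch.

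Where the arguments diverge is in the finite combinatorics. The paper first isolates a finite-height version (Lemma~\ref{L:sub}) and proves it by induction on the height $h$, establishing that if no $(m,h)$-regular subtree avoids $\cS_h$ then an $(N-m+1,h)$-regular subtree of $\cS$ exists; the infinite statement then follows by K\"onig. You instead run the contrapositive: having forced $\rho(\tau_0)<\infty$, your ``local property'' (that at most $N-m$ children of $\tau$ can lie in $\cS$ with rank $\ge\rho(\tau)$) is exactly the contrapositive of the inductive step in Lemma~\ref{L:sub}, and your induction on $\rho$ then builds the witnessing $m$-regular subtree explicitly. Your route has the advantage of producing the bad $\cR$ constructively and avoiding a separate lemma; the paper's route keeps the logic in the direct direction and makes the finite statement reusable on its own. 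The combinatorial content is the same, just organized dually.
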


This proposition is motivated by \cite{BPV} and can be proved using the method in \cite[Section 7.3]{BPV}. Here we give a proof using K\"{o}nig's lemma (see \cite[Lemma 8.1.2]{Di}).\footnote{Nesharim \cite{Ne} has a proof which is essentially the same as ours.} In our context, K\"{o}nig's lemma states that if every level of an infinite rooted tree is finite, then the tree has an infinite path starting from the root. It is worth noting that the $m=N$ case of Proposition \ref{P:tree} reduces to a special case of K\"{o}nig's lemma.

We first prove a finite version of Proposition \ref{P:tree}. For $N,h\in\NN$, we say that a finite rooted tree $\cT$ is \emph{$(N,h)$-regular} if $\cT_{h+1}=\emptyset$ and every vertex of height less than $h$ has exactly $N$ successors.

\begin{lemma}\label{L:sub}
Let $\cT$ be an $(N,h)$-regular rooted tree, $\cS\subset\cT$ be a subtree, and $1\le m\le N$ be an integer. Suppose that for every $(m,h)$-regular subtree $\cR$ of $\cT$, $\cS_h\cap\cR_h\ne\emptyset$. Then $\cS$ has an $(N-m+1,h)$-regular subtree.
\end{lemma}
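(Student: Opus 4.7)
My plan is to prove Lemma \ref{L:sub} by induction on the parameter $h$. The base case $h=0$ is immediate, since then $\cT=\{\tau_0\}$, forcing $\cS=\{\tau_0\}$, and this singleton is itself an $(N-m+1,0)$-regular subtree of $\cS$.

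For the inductive step, I focus on what happens immediately below $\tau_0$. Denote the successors of $\tau_0$ by $\tau_0^{(1)},\ldots,\tau_0^{(N)}$, and for each $i$ set $\cT^{(i)}=\cT(\tau_0^{(i)})$; each $\cT^{(i)}$ is $(N,h-1)$-regular and therefore susceptible to the inductive hypothesis. Let $I$ consist of those indices $i$ for which $\tau_0^{(i)}\in\cS$ and $\cS\cap\cT^{(i)}$ (viewed as a subtree of $\cT^{(i)}$ rooted at $\tau_0^{(i)}$) contains an $(N-m+1,h-1)$-regular subtree. Once the bound $|I|\ge N-m+1$ is established, the conclusion follows by selecting any $N-m+1$ indices from $I$, fixing such a subtree under each corresponding $\tau_0^{(i)}$, and attaching these to $\tau_0$ to obtain the desired $(N-m+1,h)$-regular subtree of $\cS$.

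The core step is therefore the cardinality bound $|I|\ge N-m+1$, which I will prove by contradiction. Assuming $|I|\le N-m$, the complement $\{1,\ldots,N\}\setminus I$ has cardinality at least $m$; pick any $m$ of its elements and call this set $J'$. For each $i\in J'$, the contrapositive of the inductive hypothesis applied to the subtree $\cS\cap\cT^{(i)}$ of $\cT^{(i)}$ produces an $(m,h-1)$-regular subtree $\cR^{(i)}\subset\cT^{(i)}$ whose level-$(h-1)$ vertices are disjoint from $\cS$; in the degenerate case $\tau_0^{(i)}\notin\cS$, any $(m,h-1)$-regular subtree of $\cT^{(i)}$ works (and such subtrees exist, since $m\le N$). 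Grafting the $\cR^{(i)}$ onto $\tau_0$ yields an $(m,h)$-regular subtree $\cR$ of $\cT$ with $\cR_h=\bigsqcup_{i\in J'}\cR^{(i)}_{h-1}$. Because $\cS$ is closed under taking ancestors, every vertex of $\cS_h$ lying below some $\tau_0^{(i)}$ is already in $(\cS\cap\cT^{(i)})_{h-1}$, so $\cR_h\cap\cS_h=\emptyset$. This contradicts the standing hypothesis that every $(m,h)$-regular subtree of $\cT$ meets $\cS$ at level $h$.

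The only subtlety I anticipate is the degenerate case $\tau_0^{(i)}\notin\cS$, in which $\cS\cap\cT^{(i)}$ is empty and thus not literally a subtree of $\cT^{(i)}$, so the inductive hypothesis does not apply verbatim; this is dispatched by the trivial choice sketched above. Apart from that bookkeeping and a routine check that the grafted tree is genuinely regular at the root level, the argument is a straightforward double-contrapositive induction, with no new combinatorial ingredient. K\"{o}nig's lemma will enter only afterwards, in the deduction of Proposition \ref{P:tree} from Lemma \ref{L:sub}.
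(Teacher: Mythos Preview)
Your argument is correct and follows essentially the same route as the paper: induction on $h$, identification of the ``good'' first-level vertices, a pigeonhole contradiction obtained by grafting $(m,h-1)$-regular subtrees below $m$ ``bad'' children to produce an $(m,h)$-regular subtree disjoint from $\cS$ at level $h$. The only cosmetic differences are that the paper starts the induction at $h=1$ and defines its good set $\cS'_1$ via the \emph{hypothesis} of the inductive statement (every $(m,h-1)$-regular subtree meets $\cS(\tau)$ at level $h-1$) rather than, as you do, via its \emph{conclusion}; since the inductive hypothesis makes the former a subset of the latter, the contradiction step is literally the same.
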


\begin{proof}
We proceed by induction on $h$. The $h=1$ case is obvious. Assume $h\ge2$ and the lemma holds if $h$ is replaced by $h-1$. Let
\begin{align*}
\cS'_1=\{\tau\in\cS_1: & \text{ for every $(m,h-1)$-regular subtree $\cR$ of $\cT(\tau)$}, \\
& \ \cS(\tau)_{h-1}\cap\cR_{h-1}\ne\emptyset\}.
\end{align*}
By the induction hypothesis, if $\tau\in\cS'_1$, then $\cS(\tau)$ has an $(N-m+1,h-1)$-regular subtree. It suffices to prove that
$\#\cS'_1\ge N-m+1$.
Suppose on the contrary that $\#\cS'_1\le N-m$. Let $\cV$ be a subset of $\cT_1\setminus\cS'_1$ with $\#\cV=m$. By the definition of $\cS'_1$, for every $\tau\in\cV$, we can choose an $(m,h-1)$-regular subtree $\cR_\tau$ of $\cT(\tau)$ such that $\cS(\tau)_{h-1}\cap(\cR_\tau)_{h-1}=\emptyset$ whenever $\tau\in\cS_1$. Let $\cR$ be the unique $(m,h)$-regular subtree of $\cT$ such that $\cR_1=\cV$ and $\cR(\tau)=\cR_\tau$ for every $\tau\in\cR_1$. Then
$$\cS_h\cap\cR_h=\bigcup_{\tau\in\cS_1\cap\cR_1}\cS(\tau)_{h-1}\cap\cR(\tau)_{h-1}=\bigcup_{\tau\in\cS_1\cap\cV}\cS(\tau)_{h-1}\cap(\cR_\tau)_{h-1}=\emptyset.$$
This contradicts the assumption of the lemma.
\end{proof}

Now we derive Proposition \ref{P:tree} from Lemma \ref{L:sub} and K\"{o}nig's lemma.

\begin{proof}[Proof of Proposition \ref{P:tree}]
Since every $(m,h)$-regular subtree of $\cT$ can be extended to an $m$-regular subtree, it follows from Lemma \ref{L:sub} that $\cS$ has an $(N-m+1,h)$-regular subtree for every $h\ge1$. We construct a rooted tree $\sF$ as follows. The root vertex of $\sF$ is the one point set $\{\tau_0\}$, where $\tau_0$ is the root of $\cT$. For $h\ge1$, $\sF_h$ is the nonempty finite set of $(N-m+1,h)$-regular subtrees of $\cS$, and $\cF\in\sF_h$ is a successor of $\cF'\in\sF_{h-1}$ if and only if $\cF'=\bigcup_{n=0}^{h-1}\cF_n$. Then $\sF$ is an infinite tree. By K\"{o}nig's lemma, $\sF$ has an infinite path starting from the root $\{\tau_0\}$. This means that there exists a family of subtrees $\{\cF(h)\in\sF_h\mid h\ge1\}$ of $\cS$ such that $\cF(h)=\bigcup_{n=0}^h\cF(h+1)_n$.
It follows that $\bigcup_{h=1}^{\infty}\cF(h)$ is an $(N-m+1)$-regular subtree of $\cS$.
\end{proof}

\section{The winning strategy}\label{S:3}

We first recall the definitions of Schmidt's game and winning sets introduced in \cite{Sc1}. The game is played by two players, say Alice and Bob.\footnote{Here we follow \cite{KW} for the names of the players.} Given a complete metric space $X$ and two numbers $\alpha,\beta\in(0,1)$. Bob starts the game by choosing a closed ball $\cB_0\subset X$. After $\cB_n$ is chosen, Alice chooses a closed ball $\cA_n\subset \cB_n$ of radius $\alpha$ times the radius of $\cB_n$, and Bob chooses a closed ball $\cB_{n+1}\subset \cA_n$ of radius $\beta$ times the radius of $\cA_n$. Then we have a nested sequence $\cB_0\supset\cA_0\supset\cB_1\supset\cA_1\supset\cdots$. A subset $Y\subset X$ is \emph{$(\alpha,\beta)$-winning} if Alice can play so that the single point in $\bigcap_{n=0}^\infty \cA_n$ lies in $Y$, and is \emph{$\alpha$-winning} if it is $(\alpha,\beta)$-winning for any $\beta\in(0,1)$. In the setting of Theorem \ref{T:main}, we have $X=\RR$. Thus $\cA_n$ and $\cB_n$ are compact intervals. In this section, we describe the winning strategy for Alice and derive Theorem \ref{T:main} from Proposition \ref{P:main} below.

Firstly, we have
\begin{equation}\label{E:01}
\Bad(0,1)=\RR\times\Bad, \quad \Bad(1,0)=\Bad\times\RR.\footnote{In some literatures, \eqref{E:01} is used as definition. As is well known, this can be also derived from \eqref{E:def}. For this, it suffices to show that if $\inf_{q\in\NN}q\|qx\|=0$, then $\inf_{q\in\NN}\max\{q\|qx\|,\|qy\|\}=0$ for every $y$. For any $\epsilon>0$, choose $q\in\NN$ such that $q\|qx\|<\epsilon^3$. By Dirichlet's theorem, there exists $n\in\NN$, $n\le\epsilon^{-1}$, such that $\|nqy\|<\epsilon$. Let $q'=nq$. Then $q'\|q'x\|\le n^2q\|qx\|<\epsilon^{-2}\epsilon^3=\epsilon$. Hence $\max\{q'\|q'x\|,\|q'y\|\}<\epsilon$.}
\end{equation}
So $\Bad(0,1;\theta)=\Bad$ and, when $\theta\in\Bad$, $\Bad(1,0;\theta)=\RR$. By \cite{Sc1}, $\Bad$ is $\frac{1}{2}$-winning.
Thus, in proving Theorem \ref{T:main}, we may assume that $s,t>0$.

Let $\theta\in\RR$ be such that \eqref{E:tech2} holds, and let $\beta\in(0,1)$. We want to prove that $\Bad(s,t;\theta)$ is $(\frac{1}{2},\beta)$-winning. In the first round of the game, Bob chooses a compact interval $\cB_0$. Let Alice choose the closed subinterval $\cA_0\subset\cB_0$ with $|\cA_0|=\frac{1}{2}|\cB_0|$ arbitrarily, where $|\cdot|$ denote the length of an interval. In what follows, we describe a strategy for the choice of $\cA_n$ ($n\ge1$) such that
$\bigcap_{n=0}^\infty \cA_n\subset\Bad(s,t;\theta)$.

Let $l=|\cA_0|$, $R=16\beta^{-4}$,
\begin{equation}\label{E:c}
c=\min\left\{\inf_{q\in\NN}q^{\frac{1}{s}}\|q\theta\|, \frac{1}{4}lR^{-1}, \frac{1}{8}R^{-2-\frac{3}{t^2}}\right\}.
\end{equation}
Then $R>16$, $c>0$. We assume that when a rational point in $\RR^2$ is expressed as $(\frac{p}{q},\frac{r}{q})$, then $q>0$ and the integers $p$, $q$, $r$ are coprime. Let    
\begin{equation}\label{E:cC}
\cC=\left\{\left(\frac{p}{q},\frac{r}{q}\right)\in\QQ^2: \left|\theta-\frac{p}{q}\right|<\frac{c}{q^{1+s}}\right\}.
\end{equation}
For $P=(\frac{p}{q},\frac{r}{q})\in\cC$, denote $$\Delta(P)=\left\{y\in\RR:\left|y-\frac{r}{q}\right|<\frac{c}{q^{1+t}}\right\}.$$
Note that if $y\notin\bigcup_{P\in\cC}\Delta(P)$, then $\max\{q^s|q\theta-p|,q^t|qy-r|\}\ge c$ for every $(\frac{p}{q},\frac{r}{q})\in\QQ^2$. Thus
\begin{equation}\label{Badc2}
\RR\setminus\bigcup_{P\in\cC}\Delta(P)\subset\Bad(s,t;\theta).
\end{equation}

Let $\cT$ be an $[R]$-regular rooted tree with root $\tau_0$, where $[\ \cdot \ ]$ denotes the integer part of a real number. We choose and fix
an injective map $\cI$ from $\cT$ to the set of closed subintervals of $\cA_0$ satisfying the following conditions:
\begin{itemize}
  \item For any $n\ge0$ and $\tau\in\cT_n$, $|\cI(\tau)|=lR^{-n}$. In particular, $\cI(\tau_0)=\cA_0$.
  \item For $\tau,\tau'\in\cT$, if $\tau\prec\tau'$, then $\cI(\tau)\subset\cI(\tau')$.
  \item For any $\tau'\in\cT$, the interiors of the intervals $\{\cI(\tau):\tau\in\cT_{\suc}(\tau')\}$ are mutually disjoint, and $\bigcup_{\tau\in\cT_{\suc}(\tau')}\cI(\tau)$ is connected.
\end{itemize}
Note that for $n\ge1$ and $\tau'\in\cT_{n-1}$, any closed subinterval of $\cI(\tau')$ of length $2lR^{-n}$ must contain an $\cI(\tau)$ for some $\tau\in\cT_{\suc}(\tau')$.  Suppose that $\cC$ is partitioned into a disjoint union
\begin{equation}\label{E:part}
\cC=\bigcup_{n=1}^\infty\cC_n.
\end{equation}
We inductively define a subtree $\cS$ of $\cT$ associated with the partition.
Let $\cS_0=\{\tau_0\}$. If $\cS_{n-1}$ $(n\ge1)$ is defined, we let
\begin{equation}\label{E:S_n}
\cS_n=\{\tau\in\cT_{\suc}(\cS_{n-1}):\cI(\tau)\cap\bigcup_{P\in\cC_n}\Delta(P)=\emptyset\}.
\end{equation}
Then $\cS=\bigcup_{n=0}^\infty\cS_n$ is a subtree of $\cT$. Note that
\begin{equation}\label{E:tau}
\cI(\tau)\subset\RR\setminus\bigcup_{P\in\cC_n}\Delta(P), \qquad \forall n\ge1, \tau\in\cS_n.
\end{equation}
The point is that for a suitable partition of $\cC$, the intervals $\{\cI(\tau):\tau\in\cS_n\}$ can serve as candidates for $\cA_{4n}$. This can be assured by the following proposition.

\begin{proposition}\label{P:main}
There exists a partition \eqref{E:part} such that the associated tree $\cS$ has an $([R]-5)$-regular subtree.
\end{proposition}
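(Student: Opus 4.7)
The plan is to construct the partition \eqref{E:part} by grouping the rationals in $\cC$ according to the size of their denominator, and then reduce the existence of an $([R]-5)$-regular subtree of $\cS$ to Proposition \ref{P:tree} with $N=[R]$ and $m=6$ (noting that $N-m+1=[R]-5$). Concretely, I would pick a geometric sequence $1=Q_0<Q_1<\cdots$ and set
$$\cC_n=\left\{\left(\frac{p}{q},\frac{r}{q}\right)\in\cC : Q_{n-1}<q\le Q_n\right\},$$
with the $Q_n$ calibrated so that for every $P\in\cC_n$ the width $|\Delta(P)|=2c/q^{1+t}$ is small compared to the level-$n$ interval length $lR^{-n}$; a natural scaling is $Q_n^{1+t}\asymp R^n/l$.

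With such a partition in hand, the proposition reduces to the single counting claim
$$(\star)\qquad\#\left\{\tau\in\cT_{\suc}(\tau') : \cI(\tau)\cap\bigcup_{P\in\cC_n}\Delta(P)\ne\emptyset\right\}\le 5$$
for every $n\ge 1$ and every $\tau'\in\cT_{n-1}$. Indeed, granted $(\star)$, every $6$-regular subtree $\cR\subset\cT$ satisfies $\cS_n\cap\cR_n\ne\emptyset$ at every level $n$ by induction: each $\tau'\in\cS_{n-1}\cap\cR_{n-1}$ has exactly $6$ successors in $\cR$, at most $5$ of which are excluded from $\cS_n$ by $(\star)$, so at least one lies in $\cS_n\cap\cR_n$. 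Hence $\cS\cap\cR$ is infinite and Proposition \ref{P:tree} yields the desired $([R]-5)$-regular subtree of $\cS$.

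The hard part is $(\star)$, which I would prove by a three-fold counting of those $P=(p/q,r/q)\in\cC_n$ with $\Delta(P)\cap\cI(\tau')\ne\emptyset$: (i) the number of distinct denominators $q\in(Q_{n-1},Q_n]$ for which some $p/q$ lies in $\cC$, bounded via \eqref{E:tech2} together with the standard gap estimate $1/(qq')\le|p/q-p'/q'|$ applied to $|\theta-p/q|<c/q^{1+s}$ from \eqref{E:cC}; (ii) for each such $q$, the number of numerators $r$ with $r/q$ within $c/q^{1+t}$ of $\cI(\tau')$, which is at most $q\cdot lR^{-(n-1)}+O(1)$; and (iii) for each such $P$, the number of successor intervals $\cI(\tau)$ that the single interval $\Delta(P)$ can meet, of order $|\Delta(P)|/(lR^{-n})+O(1)$.

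The delicate point is to choose the $Q_n$ and $c$ so that these three bounds multiply to at most $5$, uniformly in $n$ and $\tau'$. The constraint $s+t=1$ is exactly what makes this balance possible: the $s$-exponent controlling the denominator count in (i) and the $t$-exponent in the width $|\Delta(P)|$ entering (iii) combine cleanly only when $s+t=1$, and the numerical choices $R=16\beta^{-4}$ and $c$ in \eqref{E:c} are precisely calibrated to pin the product of the three counts below $6$.
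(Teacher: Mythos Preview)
Your reduction to Proposition \ref{P:tree} with $m=6$ is exactly right, and the paper does the same. The gap is in the partition and the counting claim $(\star)$.

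A denominator-based partition cannot yield $(\star)$. Your step (i) asserts that the number of $q\in(Q_{n-1},Q_n]$ with some $p/q$ satisfying $|\theta-p/q|<c/q^{1+s}$ is bounded uniformly in $n$. It is not: since $s<1$, heuristically (and provably via the three-distance theorem or equidistribution) the number of $q\le Q$ with $\|q\theta\|<c/q^s$ is of order $cQ^{t}$, so in the window $(Q_{n-1},Q_n]$ with $Q_n^{1+t}\asymp R^n$ you get roughly $cR^{nt/(1+t)}$ denominators, which tends to infinity with $n$. Condition \eqref{E:tech2} only gives a \emph{lower} bound $|\theta-p/q|\ge c/q^{1+1/s}$ and does nothing to cap this count. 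Consequently, even though each individual $\Delta(P)$ is short enough to hit at most two successors, the total number of bad successors of a fixed $\tau'\in\cT_{n-1}$ is not bounded by $5$; the product of your three counts is of order $cR^{nt/(1+t)}$, not $O(1)$.

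The paper's mechanism is genuinely different and is what you are missing. It does \emph{not} partition by $q$ alone. Using Lemma \ref{L:aug}, each $P\in\cC$ lies on a rational line $L_P=L(A_P,B_P,C_P)$ with $|A_P|\le q^s$, $B_P\le q^t$, and the partition is by the size of $qB_P$ (equation \eqref{E:C_n}), further refined by the size of $B_P$ into layers $\cC_{n,k}$ for $1\le k\le n$. The key structural fact (Lemma \ref{L:const}) is that all $P\in\cC_{n,k}$ whose $\Delta(P)$ meets a given surviving interval $\cI(\tau')$ at level $n-k$ share a \emph{single} line $L_P$; this forces their $\Delta(P)$'s to cluster near the point $(A\theta+C)/B$ inside an interval of length $lR^{-n}$, so they hit at most two level-$n$ intervals (Lemma \ref{L:<=2}). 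The bad count at level $n$ is therefore not a per-parent bound like your $(\star)$ but a sum over all ancestor levels: with $a_j=\#(\cS_j\cap\cR_j)$ one gets $a_n\ge 6a_{n-1}-2\sum_{k=1}^n a_{n-k}$, and an induction gives $a_n>2a_{n-1}$, hence $\cS\cap\cR$ is infinite. The auxiliary line and this multi-level recursion are the essential ideas; a level-by-level bound of the form $(\star)$ is simply not available.
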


The proof of Proposition \ref{P:main} will be given in the next section. In the rest of this section, we assume Proposition \ref{P:main} and prove Theorem \ref{T:main}.

\begin{proof}[Proof of Theorem \ref{T:main} from Proposition \ref{P:main}]\footnote{This proof is motivated by discussions with Nikolay Moshchevitin.}
Let $\cC=\bigcup_{n=1}^\infty\cC_n$ be a partition such that $\cS$ has an $([R]-5)$-regular subtree, say $\cS'$. We inductively prove that for every $n\ge0$,
\begin{equation}\label{E:state}
\text{Alice can choose $\cA_{4n}=\cI(\tau_n)$ for some $\tau_n\in\cS'_n$.}
\end{equation}

Since $\cA_0=\cI(\tau_0)$, \eqref{E:state} holds if $n=0$. Assume $n\ge1$ and Alice has chosen $\cA_{4(n-1)}=\cI(\tau_{n-1})$, where $\tau_{n-1}\in\cS'_{n-1}$. We call the $5$ intervals $\{\cI(\tau):\tau\in\cT_{\suc}(\tau_{n-1})\setminus\cS'_{\suc}(\tau_{n-1})\}$ dangerous intervals.
We first prove that 
\begin{align}\label{E:substate}
&\text{For $j\in\{0,1,2,3\}$, Alice can play so that $\cA_{4(n-1)+j}$, and hence } \notag\\ 
&\text{$\cB_{4(n-1)+j+1}$, contains at most $[5\cdot2^{-j}]$ dangerous intervals.}
\end{align}
If $j=0$, there is nothing to prove. Assume $1\le j\le 3$ and \eqref{E:substate} holds if $j$ is replaced by $j-1$. Thus $\cB_{4(n-1)+j}$ contains at most $[5\cdot2^{-j+1}]$ dangerous intervals. Divide $\cB_{4(n-1)+j}$ into two closed subintervals of equal length. Then Alice can choose $\cA_{4(n-1)+j}$ to be one of the subintervals so that it contains at most
$\left[\frac{1}{2}[5\cdot2^{-j+1}]\right]\le[5\cdot2^{-j}]$ dangerous intervals. This proves \eqref{E:substate}.

By letting $j=3$ in \eqref{E:substate}, we see that Alice can play so that $\cB_{4n}$ contains no dangerous intervals. Since $\cB_{4n}$ has length $2lR^{-n}$, it contains an $\cI(\tau_n)$ for some $\tau_n\in\cT_{\suc}(\tau_{n-1})$. It follows that $\tau_n\in\cS'_n$. So Alice can choose $\cA_{4n}=\cI(\tau_n)$. This completes the proof of \eqref{E:state}.

In view of \eqref{E:state}, \eqref{E:tau} and \eqref{Badc2}, we have
\begin{align*}
\bigcap_{n=0}^\infty \cA_n&=\bigcap_{n=1}^\infty \cA_{4n}=\bigcap_{n=1}^\infty\cI(\tau_n)\subset\bigcap_{n=1}^\infty\RR\setminus\bigcup_{P\in\cC_n}\Delta(P)\\
&=\RR\setminus\bigcup_{P\in\cC}\Delta(P)\subset\Bad(s,t;\theta).
\end{align*}
This proves the theorem.
\end{proof}

\section{Proof of Proposition \ref{P:main}}\label{S:4}

In order to construct the partition \eqref{E:part} required in Proposition \ref{P:main}, we consider non-vertical rational lines in $\RR^2$ of the form
$$L(A,B,C)=\left\{(x,y)\in\RR^2: y=\frac{Ax+C}{B}\right\},$$
where $A,B,C\in\ZZ$ and $B>0$. We assume that when such a line is expressed as above, then $A,B,C$ are coprime. With this convention, $A,B,C$ are uniquely determined by $L(A,B,C)$. We need a lemma from \cite{BPV}. Here we reproduce the proof for completeness.

\begin{lemma}[\textrm{[1, Lemma 1]}]\label{L:aug}
Let $P=(\frac{p}{q},\frac{r}{q})\in\cC$. Then there exists a non-vertical rational line $L_P=L(A_P,B_P,C_P)$ passing through $P$ such that
\begin{equation}\label{E:v*}
|A_P|\le q^s, \quad B_P\le q^t.
\end{equation}
\end{lemma}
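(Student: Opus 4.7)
The plan is to encode ``the rational line $L(A,B,C)$ passes through $P=(p/q,r/q)$'' as a single linear congruence on $(A,B)$ and then produce small coefficients via a pigeonhole count, with $C$ forced by the line equation.

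First I would observe that the line $\{y=(Ax+C)/B\}$ passes through $P$ exactly when $Ap-Br+Cq=0$, equivalently
\[
Ap\equiv Br\pmod{q},\qquad C=\frac{Br-Ap}{q}.
\]
So the task reduces to producing a nonzero pair $(A,B)\in\ZZ^2$ with $|A|\le q^s$, $|B|\le q^t$, and $B\ne 0$, satisfying the congruence; the value of $C$ is then forced, and dividing $(A,B,C)$ by $\gcd(A,B,C)$ only improves the bounds while enforcing the coprimality convention. To produce $(A,B)$, I would apply pigeonhole to the $(\lfloor q^s\rfloor+1)(\lfloor q^t\rfloor+1)>q^sq^t=q$ pairs $(a,b)$ with $0\le a\le q^s$ and $0\le b\le q^t$: two such pairs $(a_1,b_1)\ne(a_2,b_2)$ must produce the same residue $ap-br\bmod q$, and $(A,B):=(a_1-a_2,b_1-b_2)$ is then a nonzero element of $[-q^s,q^s]\times[-q^t,q^t]$ satisfying the required congruence; a sign flip arranges $B\ge 0$. (An equivalent formulation I would also consider is a direct appeal to Minkowski's theorem for the index-$q$ sublattice $\{(A,B)\in\ZZ^2:Ap\equiv Br\pmod q\}$ and the symmetric box of volume $4q$.)

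The one subtle point, which I see as the main obstacle, is ruling out $B=0$ so that the line is genuinely non-vertical. If $B=0$ then $q\mid Ap$, and since $\gcd(p,q)=1$ this forces $q\mid A$; when $s<1$ the bound $|A|\le q^s<q$ then forces $A=0$, contradicting the nonzeroness produced by pigeonhole. The boundary case $s=1$, $t=0$ must be handled separately: I would take $B=1$ and choose $A\in\ZZ$ to be the representative of $p^{-1}r\bmod q$ of smallest absolute value, which automatically satisfies $|A|\le q=q^s$. In all cases $B\ge 1$, and after reducing by the common $\gcd$ one obtains the promised triple $L_P=L(A_P,B_P,C_P)$ meeting both bounds of \eqref{E:v*}.
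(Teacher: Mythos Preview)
Your pigeonhole construction of $(A,B)$ with $|A|\le q^s$, $0\le B\le q^t$ and $Ap-Br\equiv 0\pmod q$ is correct and is exactly what the paper does. The gap is in your treatment of the case $B=0$. You write ``since $\gcd(p,q)=1$'', but the paper's convention (stated just before \eqref{E:cC}) is only that $\gcd(p,q,r)=1$; nothing prevents $\gcd(p,q)>1$. For instance, $(p,q,r)=(2,4,1)$ is a legitimate representation of $(1/2,1/4)$ with $\gcd(p,q)=2$, and there is no a priori reason such a point cannot lie in $\cC$. Once $\gcd(p,q)=1$ fails, your implication $q\mid Ap\Rightarrow q\mid A$ breaks down, and the argument collapses.

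The paper's proof instead uses the Diophantine hypothesis on $\theta$ together with the defining inequality of $\cC$: if $B=0$ then $A\ne 0$ and $C/A=-p/q$, so
\[
c\;\le\;|A|^{1/s}\bigl\||A|\theta\bigr\|\;\le\;|A|^{1/s}|A\theta+C|\;=\;|A|^{1+1/s}\Bigl|\theta-\frac{p}{q}\Bigr|\;\le\;q^{1+s}\Bigl|\theta-\frac{p}{q}\Bigr|\;<\;c,
\]
using $|A|\le q^s$ and $P\in\cC$. This is precisely where the assumption \eqref{E:tech2}, encoded in the first term of \eqref{E:c}, does real work; your argument never invokes it, which is why it cannot go through. (Your separate handling of $s=1$ is harmless but unnecessary: the paper has already reduced to $s,t>0$ before this lemma is stated.)
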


\begin{proof}
It suffices to prove that there exist $A,B,C\in\ZZ$ with $|A|\le q^s$ and $0<B\le q^t$ such that $Ap-Br+Cq=0$.
Since the number of pairs $(a,b)\in\ZZ^2$ with $0\le a\le q^s$ and $0\le b\le q^t$ is greater than $q$, there exist two such pairs $(a_1,b_1)\ne(a_2,b_2)$ with $b_1\ge b_2$ such that $$(a_1p-b_1r)-(a_2p-b_2r)=(a_1-a_2)p-(b_1-b_2)r$$ is divisible by $q$. Let $A=a_1-a_2$, $B=b_1-b_2$. Then $|A|\le q^s$, $0\le B\le q^t$, $(A,B)\ne(0,0)$, and there exists $C\in\ZZ$ such that $Ap-Br+Cq=0$. It remains to prove that $B\ne0$. Suppose on the contrary that $B=0$. Then $A\ne0$ and $\frac{C}{A}=-\frac{p}{q}$. In view of \eqref{E:c} and \eqref{E:cC}, we have $$c\le\inf_{n\in\ZZ}n^\frac{1}{s}\|n\theta\|\le|A|^\frac{1}{s}|A\theta+C|=|A|^{1+\frac{1}{s}}|\theta+\frac{C}{A}|\le q^{1+s}|\theta-\frac{p}{q}|<c.$$
This is a contradiction.
\end{proof}

For each $P\in\cC$, we choose and fix a line $L_P$ satisfying the conclusion of Lemma \ref{L:aug}. For $n\ge1$, let $$H_n=4cl^{-1}R^n,$$
\begin{equation}\label{E:C_n}
\cC_n=\left\{P=\left(\frac{p}{q},\frac{r}{q}\right)\in\cC: H_n\le qB_P< H_{n+1}\right\}.
\end{equation}
By \eqref{E:c}, we have $H_1=4cl^{-1}R\le1$. So $\cC=\bigcup_{n=1}^\infty\cC_n$. We prove that this partition satisfies the requirement of Proposition \ref{P:main}.

In view of \eqref{E:v*} and \eqref{E:C_n}, for $P=(\frac{p}{q},\frac{r}{q})\in\cC_n$ we have
\begin{equation}\label{E:qB}
q\ge H_n^\frac{1}{1+t}, \quad B_P<H_{n+1}^{\frac{t}{1+t}}.
\end{equation}
We further partition $\cC_n$ into a finite disjoint union. Let
$$\lambda=\frac{3}{t^2}, \quad \mu=\frac{1}{t(1+t)},$$
\begin{equation}\label{E:C_n0}
\cC_{n,1}=\{P\in\cC_n: H_{n+1}^{\frac{t}{1+t}}R^{-\lambda}\le B_P<H_{n+1}^{\frac{t}{1+t}}\}.
\end{equation}
For $2\le k\le n$, let
\begin{equation}\label{E:C_nl}
\cC_{n,k}=\{P\in\cC_n: H_{n+1}^{\frac{t}{1+t}}R^{-\lambda-(k-1)\mu}\le B_P< H_{n+1}^{\frac{t}{1+t}}R^{-\lambda-(k-2)\mu}\}.
\end{equation}
It is easy to check that
$$H_{n+1}^{\frac{t}{1+t}}R^{-\lambda-(n-1)\mu}=H_1^{\frac{t}{1+t}}R^{-\frac{sn}{t}-\frac{3+2t}{t^2(1+t)}}\le1.$$
So $\cC_n=\bigcup_{k=1}^n\cC_{n,k}$. The following lemma states that ``effective" points in $\cC_{n,k}$ lie on a single line.

\begin{lemma}\label{L:const}
For any $n\ge1$, $1\le k\le n$ and $\tau\in\cS_{n-k}$, the map $P\mt L_P$ is constant on $$\cC_{n,k}(\tau)=\{P\in\cC_{n,k}:\cI(\tau)\cap\Delta(P)\ne\emptyset\}.$$
\end{lemma}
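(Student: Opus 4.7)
The plan is to argue by contradiction: suppose $P_1,P_2\in\cC_{n,k}(\tau)$ satisfy $L_{P_1}\ne L_{P_2}$, and derive incompatible upper and lower bounds on $y_i:=(A_{P_i}\theta+C_{P_i})/B_{P_i}$, the $y$-coordinate of $L_{P_i}$ at $x=\theta$. Writing $P_i=(p_i/q_i,r_i/q_i)$, the identity $y_i-r_i/q_i=A_{P_i}(\theta-p_i/q_i)/B_{P_i}$ combined with $|A_{P_i}|\le q_i^s$ (Lemma \ref{L:aug}) and $|\theta-p_i/q_i|<c/q_i^{1+s}$ from \eqref{E:cC} gives $|y_i-r_i/q_i|\le c/(q_iB_{P_i})$. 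Because $\cI(\tau)\cap\Delta(P_i)\ne\emptyset$, the center $r_i/q_i$ of $\Delta(P_i)$ lies within $c/q_i^{1+t}$ of $\cI(\tau)$; together with $q_iB_{P_i}\ge H_n$ and $q_i^{1+t}\ge q_iB_{P_i}\ge H_n$ (from \eqref{E:C_n} and $B_{P_i}\le q_i^t$), this places both $y_i$ within $2c/H_n$ of $\cI(\tau)$. Since $4c/H_n=lR^{-n}$, we obtain the upper bound
\[
|y_1-y_2|\le lR^{-(n-k)}+lR^{-n}.
\]

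For the lower bound, I will write
\[
y_1-y_2=\frac{M\theta+N}{B_{P_1}B_{P_2}},\qquad M:=A_{P_1}B_{P_2}-A_{P_2}B_{P_1},\quad N:=C_{P_1}B_{P_2}-C_{P_2}B_{P_1},
\]
both integers. The coprimality convention on each triple $(A_{P_i},B_{P_i},C_{P_i})$ forces $(M,N)\ne(0,0)$ whenever $L_{P_1}\ne L_{P_2}$. If $M=0$ then $|N|\ge 1$ and $|y_1-y_2|\ge 1/(B_{P_1}B_{P_2})$; otherwise $M\in\ZZ\setminus\{0\}$, and \eqref{E:tech2} together with the first term in the definition \eqref{E:c} of $c$ yield $|M\theta+N|\ge\|M\theta\|\ge c/|M|^{1/s}$, so $|y_1-y_2|\ge c/(|M|^{1/s}B_{P_1}B_{P_2})$.

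It remains to control $|M|$ and compare the two bounds. From $|A_{P_i}|\le q_i^s$ and $q_iB_{P_i}<H_{n+1}$ (from \eqref{E:C_n}) one gets
\[
|M|\le q_1^sB_{P_2}+q_2^sB_{P_1}\le H_{n+1}^s\bigl(B_{P_2}/B_{P_1}^s+B_{P_1}/B_{P_2}^s\bigr).
\]
The whole point of the partition $\cC_n=\bigcup_{k=1}^n\cC_{n,k}$ from \eqref{E:C_n0}--\eqref{E:C_nl} is that inside $\cC_{n,k}$ the ratio $B_{P_1}/B_{P_2}$ is bounded (by $R^\lambda$ for $k=1$ and by $R^\mu$ for $k\ge 2$) while each $B_{P_i}$ admits an explicit upper bound in terms of a power of $H_{n+1}$ and a $k$-dependent power of $R^{-1}$. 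Substituting these into the estimate for $|M|$, and using $H_{n+1}=4cl^{-1}R^{n+1}$ together with the specific values $\lambda=3/t^2$, $\mu=1/(t(1+t))$, and the threshold $c\le R^{-2-3/t^2}/8$ from the last term of \eqref{E:c}, the lower bound strictly exceeds the upper bound, producing the desired contradiction. The hard part will be this last arithmetic: since $H_{n+1}$ grows with $n$ while the upper bound $lR^{-(n-k)}+lR^{-n}$ decays, the estimates must be balanced so that the $n$-dependence cancels and the critical inequality $|M|^{1/s}B_{P_1}B_{P_2}\le cR^{n-k}/(2l)$ holds uniformly for all $n\ge 1$ and all $k$ with $1\le k\le n$; the exponents $\lambda,\mu$ and the chosen bound on $c$ are engineered precisely so that this occurs.
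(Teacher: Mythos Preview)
Your overall setup (the upper bound on $|y_1-y_2|$, the expression $y_1-y_2=(M\theta+N)/(B_{P_1}B_{P_2})$, and the parallel/non-parallel dichotomy) is sound and essentially matches Steps~1 and~4 of the paper's argument. The gap is in the non-parallel case: your lower bound via $\|M\theta\|\ge c|M|^{-1/s}$ is too weak, and the ``$n$-dependence'' does \emph{not} cancel. Using $|A_{P_i}|<M_A$, $B_{P_i}<M_B$ (in the paper's notation for $k\ge 2$, with analogous bounds for $k=1$), one gets $|M|<2M_AM_B$, hence
\[
|M|^{1/s}B_{P_1}B_{P_2}\;\lesssim\;(M_AM_B)^{1/s}M_B^2\;\asymp\;H_{n+1}^{\,e}\cdot R^{\text{(bounded in $n$)}},\qquad e=\frac{1}{s(1+t)}+\frac{2t}{1+t}=\frac{1+2st}{s(1+t)}.
\]
Since $1+2st>s+st$ (equivalently $t(1+s)>0$), we have $e>1$; but the quantity you need to dominate, $cR^{n-k}/(2l)=H_{n+1}R^{-k-1}/8$, is only of order $H_{n+1}^1$. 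Thus your target inequality $|M|^{1/s}B_{P_1}B_{P_2}\le cR^{n-k}/(2l)$ fails for all large $n$, no matter how $\lambda,\mu,c$ are tuned. (The same obstruction already appears for $k=1$: with the $k=1$ bounds one again lands on the exponent $e>1$.) The parameters $\lambda,\mu$ were not designed for your inequality; they are tailored to make the paper's chain \eqref{E:AB0} hold.

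The paper avoids this by a genuinely different mechanism. For $k=1$ it never invokes the Diophantine bound on $M\theta$; it shows directly that $|A_{P_1}p_2-B_{P_1}r_2+C_{P_1}q_2|<1$, so $L_{P_1}$ passes through $P_2$ and hence $L_{P_1}=L_{P_2}$. For $k\ge2$ and $M\ne0$ it introduces the intersection point $P_0=L_{P_1}\cap L_{P_2}$, proves $P_0\in\cC$ and $\Delta(P_1)\subset\Delta(P_0)$, and then uses the \emph{inductive} definition of $\cS$: since $\tau\in\cS_{n-k}$, every ancestor $\tau'\in\cS_m$ with $m\le n-k$ already satisfies $\cI(\tau')\cap\Delta(P_0)=\emptyset$, forcing the level $n_0$ of $P_0$ to satisfy $n_0\ge n-k+1$ and hence $q_0^{1+t}\ge H_{n-k+1}$, which contradicts $q_0\le|M|<2M_AM_B$ via \eqref{E:AB0}. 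Your outline uses $\tau\in\cS_{n-k}$ only through $|\cI(\tau)|=lR^{-(n-k)}$, discarding this inductive information; that is exactly what makes the non-parallel case go through in the paper and what is missing from your argument.
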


\begin{proof}
Let $P_1,P_2\in\cC_{n,k}(\tau)$, $P_1\ne P_2$. We need to prove that $L_{P_1}=L_{P_2}$. We divide the proof into several steps.

\textbf{Step 1.} Suppose $P_i=(\frac{p_i}{q_i},\frac{r_i}{q_i})$, $L_{P_i}=L(A_i,B_i,C_i)$, $i=1,2$. Let $\phi_i=\frac{A_i\theta+C_i}{B_i}$. We first prove the following auxiliary inequalities:
\begin{equation}\label{E:aug1}
\left|\frac{p_1}{q_1}-\frac{p_2}{q_2}\right|<\frac{c}{q_1^{1+s}}+\frac{c}{q_2^{1+s}},
\end{equation}
\begin{equation}\label{E:aug0}
\frac{c}{q_i^{1+t}}\le\frac{c}{q_iB_i}<\frac{1}{4}lR^{-n+k},
\end{equation}
\begin{equation}\label{E:aug2}
\left|\frac{r_1}{q_1}-\frac{r_2}{q_2}\right|<\frac{3}{2}lR^{-n+k},
\end{equation}
\begin{equation}\label{E:aug3}
|\phi_1-\phi_2|<2lR^{-n+k}.
\end{equation}
The first two inequalities can be verified as follows:
$$\left|\frac{p_1}{q_1}-\frac{p_2}{q_2}\right|\le\left|\theta-\frac{p_1}{q_1}\right|+\left|\theta-\frac{p_2}{q_2}\right|
<\frac{c}{q_1^{1+s}}+\frac{c}{q_2^{1+s}},$$
$$\frac{c}{q_i^{1+t}}\le\frac{c}{q_iB_i}\le\frac{c}{H_n}=\frac{1}{4}lR^{-n}<\frac{1}{4}lR^{-n+k}.$$
To prove the third one, we choose $y_i\in\cI(\tau)\cap\Delta(P_i)$. Then, in view of \eqref{E:aug0},
\begin{align*}
\left|\frac{r_1}{q_1}-\frac{r_2}{q_2}\right|&\le\left|y_1-\frac{r_1}{q_1}\right|+\left|y_2-\frac{r_2}{q_2}\right|+|y_1-y_2|
<\frac{c}{q_1^{1+t}}+\frac{c}{q_2^{1+t}}+|\cI(\tau)|\\
&<\frac{1}{2}lR^{-n+k}+|\cI(\tau)|=\frac{3}{2}lR^{-n+k}.
\end{align*}
This proves \eqref{E:aug2}. By \eqref{E:aug0}, we have
$$\left|\phi_i-\frac{r_i}{q_i}\right|=\frac{|A_i|}{B_i}\left|\theta-\frac{p_i}{q_i}\right|<\frac{q_i^s}{B_i}\frac{c}{q_i^{1+s}}=\frac{c}{q_iB_i}
<\frac{1}{4}lR^{-n+k}.$$
It follows from this inequality and \eqref{E:aug2} that
$$|\phi_1-\phi_2|\le\left|\phi_1-\frac{r_1}{q_1}\right|+\left|\phi_2-\frac{r_2}{q_2}\right|+\left|\frac{r_1}{q_1}-\frac{r_2}{q_2}\right|
<2lR^{-n+k}.$$
Thus we obtain \eqref{E:aug3}.

\textbf{Step 2.} Assume $k=1$. In view of \eqref{E:C_n} and \eqref{E:C_n0}, it follows that
\begin{equation}\label{E:a1}
H_{n+1}^{\frac{1}{1+t}}R^{-1}<q_i< H_{n+1}^{\frac{1}{1+t}}R^\lambda.
\end{equation}
We prove that the line $L_{P_1}$ passes through $P_2$. Firstly, we have
\begin{align}\label{E:k11}
|A_1p_2-B_1r_2+C_1q_2|
=&q_2\left|A_1\left(\frac{p_2}{q_2}-\frac{p_1}{q_1}\right)-B_1\left(\frac{r_2}{q_2}-\frac{r_1}{q_1}\right)\right|\notag\\
\le & q_2|A_1|\left|\frac{p_1}{q_1}-\frac{p_2}{q_2}\right|+q_2B_1\left|\frac{r_1}{q_1}-\frac{r_2}{q_2}\right|.
\end{align}
By \eqref{E:aug1} and \eqref{E:a1},
\begin{equation}\label{E:k12}
q_2|A_1|\left|\frac{p_1}{q_1}-\frac{p_2}{q_2}\right|
<q_2q_1^s\left(\frac{c}{q_1^{1+s}}+\frac{c}{q_2^{1+s}}\right)
=c\left(\frac{q_2}{q_1}+\frac{q_1^s}{q_2^s}\right)
<2cR^{1+\lambda}.
\end{equation}
On the other hand, by \eqref{E:a1}, \eqref{E:qB} and \eqref{E:aug2},
\begin{equation}\label{E:k13}
q_2B_1\left|\frac{r_1}{q_1}-\frac{r_2}{q_2}\right|
<\frac{3}{2}H_{n+1}lR^{-n+1+\lambda}
=6cR^{2+\lambda}.
\end{equation}
Substituting \eqref{E:k12}, \eqref{E:k13} into \eqref{E:k11} and taking \eqref{E:c} into account, we obtain
$$|A_1p_2-B_1r_2+C_1q_2|<8cR^{2+\lambda}\le1.$$
This implies that $A_1p_2-B_1r_2+C_1q_2=0$. Thus $L_{P_1}$ is the line passing through $P_1$ and $P_2$. Similarly, $L_{P_2}$ is the line passing through $P_1$ and $P_2$. Hence $L_{P_1}=L_{P_2}$. This proves the lemma for the $k=1$ case.

\textbf{Step 3.} In what follows we assume $k\ge2$. In view of \eqref{E:C_n} and \eqref{E:C_nl}, we have
$$|A_i|\le q_i^s<H_{n+1}^{\frac{s}{1+t}}R^{s(\lambda+(k-1)\mu)}, \quad B_i< H_{n+1}^{\frac{t}{1+t}}R^{-(\lambda+(k-2)\mu)}.$$
To simplify the calculation, we denote $$M_A=H_{n+1}^{\frac{s}{1+t}}R^{s(\lambda+(k-1)\mu)}, \quad M_B=H_{n+1}^{\frac{t}{1+t}}R^{-(\lambda+(k-2)\mu)}.$$
Then $$|A_i|<M_A, \quad 1\le B_i<M_B.$$
We first verify
\begin{equation}\label{E:AB0}
M_B^2< M_A^sM_B^{2+s}< M_A^{1+t}M_B^{1+t}<\frac{1}{5}cl^{-1}R^{n-k}.
\end{equation}
In view of
$$\frac{M_B^s}{M_A^t}=\frac{R^{-s(\lambda+(k-2)\mu)}}{R^{st(\lambda+(k-1)\mu)}}<1,$$
we have
$$\frac{M_B^2}{M_A^sM_B^{2+s}}\le\frac{M_B^{1+\frac{1}{t}}}{M_A^sM_B^{2+s}}=\left(\frac{M_B^s}{M_A^t}\right)^\frac{s}{t}<1, \qquad \frac{M_A^sM_B^{2+s}}{M_A^{1+t}M_B^{1+t}}=\left(\frac{M_B^s}{M_A^t}\right)^2<1.$$
Thus the first two inequalities in \eqref{E:AB0} hold. It is easy to check that
\begin{align*}
5lM_A^{1+t}M_B^{1+t}=&5lH_{n+1}R^{(1-t^2)(\lambda+(k-1)\mu)-(1+t)(\lambda+(k-2)\mu)}\\
=&20cR^{-\frac{2}{t}-1}R^{n-k}< cR^{n-k}.
\end{align*}
Thus the third inequality in \eqref{E:AB0} holds as well.

\textbf{Step 4.} We prove $L_{P_1}=L_{P_2}$ by contradiction. Suppose $L_{P_1}\ne L_{P_2}$. We first consider the case where $L_{P_1}$ is parallel to $L_{P_2}$. In this case, we have $A_1B_2-A_2B_1=0$. Thus $B_1C_2-B_2C_1\ne0$. It follows that
$$|\phi_1-\phi_2|=\left|\frac{C_1}{B_1}-\frac{C_2}{B_2}\right|\ge\frac{1}{B_1B_2}>\frac{1}{M_B^2}.$$
Combining this inequality with \eqref{E:aug3}, we obtain
$$2lM_B^2R^{-n+k}>1.$$
But by \eqref{E:AB0} and \eqref{E:c},
$$2lM_B^2R^{-n+k}<c<1.$$
This is a contradiction.

\textbf{Step 5.} Now we suppose that $L_{P_1}$ is not parallel to $L_{P_2}$. Let $P_0=(\frac{p_0}{q_0},\frac{r_0}{q_0})\in\QQ^2$ be the intersection of $L(P_1)$ and $L(P_2)$. Then $A_1B_2-A_2B_1\ne0$ and
$$\frac{p_0}{q_0}=\frac{B_1C_2-B_2C_1}{A_1B_2-A_2B_1}, \quad \frac{r_0}{q_0}=\frac{A_1C_2-A_2C_1}{A_1B_2-A_2B_1}.$$
In particular, the nonzero integer $A_1B_2-A_2B_1$ is divisible by $q_0$. Thus
\begin{equation}\label{E:q0}
q_0\le|A_1B_2-A_2B_1|<2M_AM_B.
\end{equation}
We first verify that $P_0\in\cC$. In view of $$|\phi_1-\phi_2|=\left|\frac{A_1}{B_1}-\frac{A_2}{B_2}\right|\left|\theta-\frac{p_0}{q_0}\right|=\frac{|A_1B_2-A_2B_1|}{B_1B_2}\left|\theta-\frac{p_0}{q_0}\right|
\ge\frac{q_0}{M_B^2}\left|\theta-\frac{p_0}{q_0}\right|,$$
we have, by \eqref{E:q0}, \eqref{E:aug3} and \eqref{E:AB0}, that
$$q_0^{1+s}\left|\theta-\frac{p_0}{q_0}\right|\le q_0^sM_B^2|\phi_1-\phi_2|<4lM_A^sM_B^{2+s}R^{-n+k}<c.$$
Hence $P_0\in\cC$.

\textbf{Step 6.} We prove that $\Delta(P_1)\subset\Delta(P_0)$. Suppose $y\in\Delta(P_1)$. Then by \eqref{E:aug0},
$$\left|y-\frac{r_1}{q_1}\right|<\frac{c}{q_1^{1+t}}<\frac{1}{4}lR^{-n+k}.$$
In view of this inequality and \eqref{E:q0}, it follows that
\begin{align}
q_0^{1+t}\left|y-\frac{r_0}{q_0}\right|
&\le q_0^{1+t}\left|y-\frac{r_1}{q_1}\right|+q_0^{1+t}\left|\frac{r_1}{q_1}-\frac{r_0}{q_0}\right|\notag\\
&< lM_A^{1+t}M_B^{1+t}R^{-n+k}+2M_A^tM_B^t|A_1B_2-A_2B_1|\left|\frac{r_1}{q_1}-\frac{r_0}{q_0}\right|.\label{E:mid}
\end{align}
In order to estimate the second term of the right hand side, we note that
$$\frac{r_i}{q_i}-\frac{r_0}{q_0}=\frac{A_i}{B_i}\left(\frac{p_i}{q_i}-\frac{p_0}{q_0}\right), \quad i=1,2.$$
Eliminating $\frac{p_0}{q_0}$ from the two equations, we obtain
$$\left(\frac{A_1}{B_1}-\frac{A_2}{B_2}\right)\left(\frac{r_1}{q_1}-\frac{r_0}{q_0}\right)=\frac{A_1}{B_1}\left(\frac{r_1}{q_1}-\frac{r_2}{q_2}\right)
-\frac{A_1A_2}{B_1B_2}\left(\frac{p_1}{q_1}-\frac{p_2}{q_2}\right).$$
Thus by \eqref{E:aug1}, \eqref{E:aug0} and \eqref{E:aug2},
\begin{align*}
|A_1B_2-A_2B_1|\left|\frac{r_1}{q_1}-\frac{r_0}{q_0}\right|
\le&|A_1|B_2\left|\frac{r_1}{q_1}-\frac{r_2}{q_2}\right|+|A_1A_2|\left|\frac{p_1}{q_1}-\frac{p_2}{q_2}\right|\\
<&\frac{3}{2}lM_AM_BR^{-n+k}+|A_1A_2|\left(\frac{c}{q_1^{1+s}}+\frac{c}{q_2^{1+s}}\right)\\
=&\frac{3}{2}lM_AM_BR^{-n+k}+|A_2|B_1\frac{|A_1|}{q_1^s}\frac{c}{q_1B_1}+|A_1|B_2\frac{|A_2|}{q_2^s}\frac{c}{q_2B_2}\\
<&2lM_AM_BR^{-n+k}.
\end{align*}
Substituting this into \eqref{E:mid} and using \eqref{E:AB0}, we obtain
$$q_0^{1+t}\left|y-\frac{r_0}{q_0}\right|<5lM_A^{1+t}M_B^{1+t}R^{-n+k}< c.$$
Thus $y\in\Delta(P_0)$. This proves $\Delta(P_1)\subset\Delta(P_0)$.

\textbf{Step 7.} Let $n_0\ge1$ be the unique integer such that $P_0\in\cC_{n_0}$. We prove that
\begin{equation}\label{E:m}
n_0\ge n-k+1.
\end{equation}
Suppose on the contrary that $n_0\le n-k$. Let $\tau'$ be the unique vertex in $\cS_{n_0}$ such that $\tau\prec\tau'$. Then by \eqref{E:S_n}, $$\cI(\tau)\cap\Delta(P_1)\subset\cI(\tau')\cap\Delta(P_0)=\emptyset.$$ This contradicts $P_1\in\cC_{n,k}(\tau)$.

\textbf{Step 8.} In view of \eqref{E:qB} and \eqref{E:m}, we have $$q_0^{1+t}\ge H_{n_0}\ge H_{n-k+1}=4cl^{-1}R^{n-k+1}.$$
But by \eqref{E:q0} and \eqref{E:AB0},
$$q_0^{1+t}<4M_A^{1+t}M_B^{1+t}<cl^{-1}R^{n-k}.$$
This is a contradiction. Thus the proof of Lemma \ref{L:const} is completed.
\end{proof}

The purpose of Lemma \ref{L:const} is to establish the following estimate.

\begin{lemma}\label{L:<=2}
For any $n\ge1$, $1\le k\le n$ and $\tau'\in\cS_{n-k}$, the set
\begin{equation}\label{E:set1}
\{\tau\in\cT_n:\tau\prec\tau',\cI(\tau)\cap\bigcup_{P\in\cC_{n,k}}\Delta(P)\ne\emptyset\}
\end{equation}
contains at most two vertices.
\end{lemma}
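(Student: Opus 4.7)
The plan is to reduce the count to a one-dimensional length comparison by exploiting the linear structure given by Lemma \ref{L:const}. If $\tau$ is in the set \eqref{E:set1} and $P\in\cC_{n,k}$ satisfies $\cI(\tau)\cap\Delta(P)\ne\emptyset$, then $\cI(\tau)\subset\cI(\tau')$ forces $P\in\cC_{n,k}(\tau')$. Hence it suffices to bound the number of level-$n$ descendants $\tau$ of $\tau'$ whose $\cI(\tau)$ meets $\bigcup_{P\in\cC_{n,k}(\tau')}\Delta(P)$. If $\cC_{n,k}(\tau')=\emptyset$ there is nothing to prove; otherwise Lemma \ref{L:const} yields a common line $L=L(A,B,C)$ through every such $P$, and so the whole union of the $\Delta(P)$ is controlled by this single line.

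I would then localise $\bigcup_{P\in\cC_{n,k}(\tau')}\Delta(P)$ on the real axis. Let $\phi=(A\theta+C)/B$ be the height of $L$ over $\theta$. For $P=(p/q,r/q)\in\cC_{n,k}(\tau')$, collinearity gives $r/q-\phi=(A/B)(p/q-\theta)$; combining this with $|p/q-\theta|<c/q^{1+s}$ from \eqref{E:cC}, the estimate $|A|\le q^s$ from Lemma \ref{L:aug}, and $qB\ge H_n$ from the definition of $\cC_n$, I obtain $|r/q-\phi|<c/(qB)\le c/H_n$. Similarly, the radius of $\Delta(P)$ is $c/q^{1+t}\le c/(qB)\le c/H_n$, using $B\le q^t$. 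Adding these bounds, every point of $\bigcup_{P\in\cC_{n,k}(\tau')}\Delta(P)$ lies in the open interval $(\phi-2c/H_n,\phi+2c/H_n)$, whose length $4c/H_n$ equals $lR^{-n}$ by the definition of $H_n$.

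To conclude, the intervals $\{\cI(\tau):\tau\in\cT_n,\tau\prec\tau'\}$ are closed, have pairwise disjoint interiors, length $lR^{-n}$ each, and form a connected chain inside $\cI(\tau')$. An open interval of length exactly $lR^{-n}$ cannot contain any of them, and can straddle at most one of the shared boundary points between consecutive intervals in the chain, so it meets at most two of them. This gives the bound $2$ claimed in the lemma. The main care required is not in the geometry but in the chain of inequalities above: one must invoke both $|A|\le q^s$ and $B\le q^t$ from Lemma \ref{L:aug} together with the refined lower bound $qB\ge H_n$ (rather than the weaker $q\ge H_n^{1/(1+t)}$ alone) in order to force both $|r/q-\phi|$ and the radius of $\Delta(P)$ into the single budget $c/H_n$, so that the combined diameter matches $lR^{-n}$ exactly.
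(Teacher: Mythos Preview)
Your proof is correct and follows essentially the same route as the paper: reduce to $\cC_{n,k}(\tau')$, use the common line from Lemma~\ref{L:const} to trap $\bigcup_{P\in\cC_{n,k}(\tau')}\Delta(P)$ in an open interval of length $lR^{-n}$ centred at $\phi=(A\theta+C)/B$, and then count. One small inaccuracy in the final paragraph: the level-$n$ descendants of $\tau'$ need not form a \emph{connected} chain when $k\ge2$ (the hypotheses on $\cI$ only guarantee that the \emph{immediate} successors of any single vertex do), so the ``shared boundary point'' justification is not quite right as stated; however, the pairwise-disjoint-interiors property alone already forces at most two closed intervals of length $lR^{-n}$ to meet an open interval of the same length, which is exactly how the paper phrases it.
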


\begin{proof}
Firstly, we claim that the set \eqref{E:set1} is a subset of
\begin{equation}\label{E:set2}
\{\tau\in\cT_n:\cI(\tau)\cap\bigcup_{P\in\cC_{n,k}(\tau')}\Delta(P)\ne\emptyset\}.
\end{equation}
In fact, if $\tau\in\cT_n$ satisfies $\tau\prec\tau'$ and $\cI(\tau)\cap\bigcup_{P\in\cC_{n,k}}\Delta(P)\ne\emptyset$, then there exists $P_0\in\cC_{n,k}$ such that
$$\cI(\tau')\cap\Delta(P_0)\supset\cI(\tau)\cap\Delta(P_0)\ne\emptyset.$$
It follows that $P_0\in\cC_{n,k}(\tau')$. Thus
$$\cI(\tau)\cap\bigcup_{P\in\cC_{n,k}(\tau')}\Delta(P)\supset\cI(\tau)\cap\Delta(P_0)\ne\emptyset.$$
This means that $\tau$ lies in the set \eqref{E:set2}.

Now we prove that \eqref{E:set2} contains at most two vertices. By Lemma \ref{L:const}, there exists $(A,B,C)\in\ZZ^3$ with $B>0$ such that for every $P=(\frac{p}{q},\frac{r}{q})\in\cC_{n,k}(\tau')$, $$|A|\le q^s, \quad B\le q^t, \quad Ap-Br+Cq=0, \quad qB\ge H_n.$$
For such $P$, if $y\in\Delta(P)$, then
\begin{align*}
\left|y-\frac{A\theta+C}{B}\right|&=\left|\left(y-\frac{r}{q}\right)-\frac{A}{B}\left(\theta-\frac{p}{q}\right)\right|
\le\left|y-\frac{r}{q}\right|+\frac{|A|}{B}\left|\theta-\frac{p}{q}\right|\\
& <\frac{c}{q^{1+t}}+\frac{q^s}{B}\frac{c}{q^{1+s}}\le\frac{2c}{qB}\le\frac{2c}{H_n}=\frac{1}{2}lR^{-n}.
\end{align*}
This implies that $\bigcup_{P\in\cC_{n,k}(\tau')}\Delta(P)$ is contained in the open interval
\begin{equation}\label{E:interval}
\left(\frac{A\theta+C}{B}-\frac{1}{2}lR^{-n},\frac{A\theta+C}{B}+\frac{1}{2}lR^{-n}\right),
\end{equation}
which has length $lR^{-n}$. Since the intervals $\{\cI(\tau):\tau\in\cT_n\}$ are of length $lR^{-n}$ and have mutually disjoint interiors, at most two of them intersect the interval \eqref{E:interval}. Thus the set \eqref{E:set2} contains at most two vertices.
\end{proof}

We now prove Proposition \ref{P:main} using Lemma \ref{L:<=2} and some ideas in the proof of \cite[Lemma 4]{BPV}.

\begin{proof}[Proof of Proposition \ref{P:main}]
By Proposition \ref{P:tree}, it suffices to prove that the intersection of $\cS$ with every $6$-regular subtree of $\cT$ is infinite. Let $\cR\subset\cT$ be a $6$-regular subtree, and denote $a_n=\#\cS_n\cap\cR_n$. Then $a_0=1$. We prove the infinity of $\cS\cap\cR$ by showing that for any $n\ge1$,
\begin{equation}\label{E:infinity}
a_n>2a_{n-1}.
\end{equation}

Let
$$\cU_n=\{\tau\in\cT_{\suc}(\cS_{n-1}\cap\cR_{n-1}):\cI(\tau)\cap\bigcup_{P\in\cC_n}\Delta(P)\ne\emptyset\}.$$
Then
\begin{align*}
\cS_n\cap\cR_n
=&\{\tau\in\cR_{\suc}(\cS_{n-1}\cap\cR_{n-1}):\cI(\tau)\cap\bigcup_{P\in\cC_n}\Delta(P)=\emptyset\}\\
=&\cR_{\suc}(\cS_{n-1}\cap\cR_{n-1})\setminus\cU_n.
\end{align*}
It follows that
\begin{equation}\label{E:infinity1}
a_n\ge6a_{n-1}-\#\cU_n.
\end{equation}
On the other hand,
\begin{align*}
\cU_n=&\bigcup_{k=1}^n\{\tau\in\cT_{\suc}(\cS_{n-1}\cap\cR_{n-1}):\cI(\tau)\cap\bigcup_{P\in\cC_{n,k}}\Delta(P)\ne\emptyset\}\\
\subset&\bigcup_{k=1}^n\{\tau\in\cT_n:\tau\prec\cS_{n-k}\cap\cR_{n-k},\cI(\tau)\cap\bigcup_{P\in\cC_{n,k}}\Delta(P)\ne\emptyset\}\\
=&\bigcup_{k=1}^n\bigcup_{\tau'\in\cS_{n-k}\cap\cR_{n-k}}\{\tau\in\cT_n:\tau\prec\tau',\cI(\tau)\cap\bigcup_{P\in\cC_{n,k}}\Delta(P)\ne\emptyset\}.
\end{align*}
By Lemma \ref{L:<=2}, we have
\begin{equation}\label{E:infinity2}
\#\cU_n\le\sum_{k=1}^n2a_{n-k}.
\end{equation}
From \eqref{E:infinity1} and \eqref{E:infinity2}, we obtain
\begin{equation}\label{E:infinity3}
a_n\ge6a_{n-1}-\sum_{k=1}^n2a_{n-k}.
\end{equation}
By letting $n=1$ in \eqref{E:infinity3}, we see that $a_1\ge4$. So \eqref{E:infinity} holds for $n=1$. Assume $n\ge2$ and \eqref{E:infinity} holds if $n$ is replaced by $1,\ldots,n-1$.
Then for any $1\le k\le n$, we have $$a_{n-k}\le 2^{-k+1}a_{n-1}.$$ Substituting this into \eqref{E:infinity3}, we obtain
$$a_n\ge6a_{n-1}-2a_{n-1}\sum_{k=1}^n2^{-k+1}>2a_{n-1}.$$
This completes the proof of \eqref{E:infinity}.
\end{proof}

\section*{Acknowledgments} The author is grateful to Dzmitry Badziahin, Dmitry Kleinbock, Andrew Pollington, Sanju Velani and Barak Weiss for valuable conversations and suggestions, especially to B. Weiss for informing the author of the work of Nesharim \cite{Ne}. A special thank goes to Nikolay Moshchevitin. After seeing an early version of the paper, which only gave $\frac{1}{8}$-winning property, he pointed out that Proposition \ref{P:main} implies $\frac{1}{2}$-winning property, and generously let the author incorporate his idea into this paper.

\end{document}